\newtheorem{thm}{Theorem}
\newtheorem{ques}[thm]{Question}
\begin{document}

\title*{Some tables of right set properties in affine Weyl groups of type A}
\titlerunning{Some tables of right set properties}
\author{Leonard L. Scott and Ethan C. Zell}

\institute{Leonard L. Scott \at University of Virginia Department of Mathematics, Charlottesville VA  22903, \email{lls2l@virginia.edu}
\and Ethan C. Zell \at University of Virginia Department of Mathematics, Charlottesville VA  22903,  \email{ecz4wu@virginia.edu}}





\maketitle \abstract{The tables of the title are a first attempt to
understand empirically the sizes of certain distinguished sets,
introduced by Hankyung Ko, of elements in affine Weyl groups.  The
distinguished sets themselves each have a largest element $w$, and
all other elements are constructible combinatorially from that
largest element. The combinatorics are given in the language of
right sets, in the sense of Kazhdan-Lusztig. Collectively, the
elements in a given distinguished set parameterize highest weights
of possible modular composition factors of the ``reduction modulo
$p$" of a $p^{th}$ root of unity irreducible characteristic $0$
quantum group module. Here  $p$ is a prime, subject to conditions
discussed below, in some cases known to be quite mild.  Thus, the
sizes of the distinguished sets in question are relevant to
estimating how much time might be saved in any future direct
approach to computing irreducible modular characters of algebraic
groups from larger irreducible characters of quantum groups.
Actually, Ko has described two methods for obtaining potentially
effective systems of such sets.  She has  proved one method to work
at least for all primes $p$ as large as the Coxeter number $h$, in a
context she indicates largely generalizes to smaller $p$.  The other
method, which produces smaller distinguished sets, is known for
primes  $p\geq h $ for which the Lusztig character formula holds,
but is currently unknown to be valid without the latter condition.
In the tables of this paper we calculate, for all $w$ indexing a
($p$-)regular highest weight in  the ($p$-)restricted parallelotope,
distinguished set  sizes for both methods,  for affine types A$_3$,
A$_4$,  and A$_5$. To keep the printed version of this paper
sufficiently small, we only use those $w$ indexing actual restricted
weights in the A$_5$  case. The sizes corresponding to the two
methods of Ko  are listed in columns (6) and (5), respectively, of
the tables.  We also make calculations in column (7) for a third,
more ``obvious" system of distinguished sets (see part (1) of
Proposition 1 below), to indicate how much of an improvement each of
the first two systems provides. Finally, all calculations have been
recently completed for affine type A$_6$, and  the restricted cases
listed in this paper as a final table. }


\bigskip
\bigskip
\bigskip
\bigskip
This research was supported by Simons Foundation grant 359383 and the University of Virginia.




\bigskip
\section{Introduction}
    In a recent paper \cite{Ko17} Hankyung Ko has introduced  two methods potentially useful for algorithmic calculations of
    irreducible modular characters of semisimple algebraic groups $G$ in characteristic $p>0$. It has been well known since the early '60's that the
    larger ``Weyl" (or ``standard") modules provide at least one setting for calculating such irreducible modular characters, using natural bilinear forms.
     See \cite{Won72} and \cite{St68}\footnote{Steinberg had noticed the form used by Wong earlier. His context was different, but the form he found could be
      used to construct Wong's form.  Steinberg worked with enveloping algebras, whereas Wong worked with Weyl modules, focusing on their irreducible heads. It
      is hard to know if Steinberg  had modular irreducible representations in mind with his form, though such representations were, of course, one of his interests.
      He was later the AMS reviewer of Wong's paper, and does not mention noticing such a potential application, only that he had previously observed the form in his
      (widely distributed) Yale lecture notes. }  Such bilinear form approaches were largely forgotten when Lusztig proposed
      a modular character formula \cite{Lus79}, shown to hold for primes $p$ large relative to the root system in \cite{AJS94}, with an
      explicit lower bound on $p$ given  in \cite{Fie12}.  However, the large sizes of negative examples in \cite{Wil17}, as well as the general
      need for results applying for smaller primes,  led Williamson and collaborators to a recent series of papers \cite{RW15}, \cite{AMRW17a}, \cite{AMRW17b}.  These results
       are conceptually quite elegant, and  already provide formulas  in all types with lower bounds on $p$  close to those originally  proposed by Lusztig
       (and apply for all primes in type $A$). However, the cost in computational complexity is hard to estimate.
       Briefly, characters of irreducible modules are recovered in \cite{RW15}  from ingredients in new formulas for characters of tilting modules. The ingredients themselves
       are obtained from considerations and calculations using $p$-canonical bases, and calculation of the latter involves a bilinear form; see \cite{TW16}.

      This background makes it reasonable to at least investigate what can be done to revive the original bilinear form approach, and the results of Ko we discuss here
       should be viewed in that context.  Before going further, we introduce some of the notation used in \cite{Ko17}. Much of it also largely follows Jantzen \cite{Ja03}.

\section{Notation and preliminaries}
       Let $G$ be a semisimple simply connected algebraic group  over an algebraically closed field $k$ of characteristic
$p>0$.  Fix a split maximal torus $T$ with associated root system $R$ and integral weight lattice $X=X(T)$. Fix also
a choice $R^+$ for the positive roots in $R$, and a corresponding set $X^+\subseteq X$ of dominant weights.  The sum of all fundamental dominant weights is denoted $\rho$.
 The ordinary finite Weyl group associated to $R$ is denoted $W_f$, and the affine Weyl group is $W_p$; it is the semidirect product of $p\mathbb{Z}R$ with $W_f$.
 (This semidirect product makes sense for $p$ any positive integer, such as $p=1$ or $p=h$, the Coxeter number. This will be useful later in our tables.)
 The ``dot action" of an element $w\in W_p$ on an element $\gamma\in X$ is given by $w\cdot\gamma=w(\gamma+\rho)-\rho$.
 In this action, reflections in hyperplanes passing through $-\rho$  and orthogonal to fundamental roots in $R$ form a fundamental set $s_1, s_2, ...$ of
 generators for $W_f$, in the sense of Coxeter groups. We choose  an additional element of $W_p$ to be the reflection $s_0$  in the
 hyperplane $\{x\mid (x+\rho,\alpha^{\vee}_0) = -p\}$ to complete the set of fundamental generators $S$ for $W_p$. (Here $\alpha_0$ is the maximum short root, and we do NOT
 follow Jantzen, who would use $+p$, instead of  $-p$, to define the hyperplane. In the terminology of \cite{Ko17}, our $S$ consists of reflections in the walls of the top antidominant alcove
  $C^-$, not the bottom dominant alcove.)  This is a good time to mention that the right set $\mathcal R(w)$ of an element $w\in W_p$  is defined to be $\{s\in S\mid ws<w\}.$ Here the inequality is in the sense of Bruhat(-Chevalley), which, in this case, just means that $ws$ is shorter than $w$. The terminology ``right descent set" is used in place of ``right set" in \cite{Ko17}, and appears also in \cite{TW16}.

       We will consider, for each dominant weight $\lambda \in X^+$,
 three finite-dimensional $G$-modules, $\Delta(\lambda)$,
$\Delta^0(\lambda)$, and $L(\lambda)$.   The third is the
irreducible $G$-module of highest weight $\lambda$, whose character
we wish to calculate. The first module is the Weyl (or ``standard")
module with highest weight $\lambda$. It may be obtained by
``reduction mod $p$"  (base change to $k$) from a module for a
$\mathbb Z$-form of the enveloping algebra for the complex
semisimple Lie algebra associated to $R$. This ``integral" module is
irreducible upon base change to $\mathbb Q$ or $\mathbb C$. It
follows that the character of $\Delta(\lambda)$ is known, given by
Weyl's character formula.  With somewhat more care with
coefficients, a similar process, using a quantum enveloping algebra
at a root of unity, rather than a Lie algebra enveloping algebra,
can be used to construct $\Delta^0(\lambda)$. See the discussion in
\cite{Ko17}, or the more available \cite{CPS09}, which uses the
notation $\Delta^{red}(\lambda)$. To summarize, $\Delta^0(\lambda)$
is the ``reduction mod $p$" of an irreducible quantum enveloping
algebra module at a root of unity (a $p^{th}$ root of unity, for $p$
odd).  For all but a  few primes, its character  is known to be
given by Lusztig's character formula in simply-laced types of all
ranks (for all primes in type A), and in all types when $p>h$; see
\cite[p. 273]{Tan04}. The module $L(\lambda)$ is a homomorphic image
(and the head) of  $\Delta^0(\lambda)$, and the latter is a
homomorphic image of $\Delta(\lambda)$.

It is sufficient, to motivate our tables, to work with the $p$-regular case.  This also simplifies notation, since, when $p\geq h$,  most questions about modules  for $G$ reduce to those for modules with highest weight in the orbit $W_p\cdot -2\rho$. Here $-2\rho\in C^-$ and, letting $w_0$ denote the longest word in the ordinary Weyl group $W_f$, we have $0=w_0\cdot -2\rho$. The weight $0$ is the highest weight of the $1$-dimensional trivial module $L(0)$. More generally, the dominant weights $\lambda$ in the orbit $W_p\cdot -2\rho$ are precisely the elements $w_0y\cdot -2\rho$ where $y\in W_p$ satisfies
$\ell(w_0y)=\ell(w_0)+\ell(y)$.
That is, $y$ is the shortest  element in its right  coset $W_fy$; also $w_0y$ is the longest element in its (the same) right coset of $W_f$.  Equivalently, $y$ is the unique minimal element in its right coset (using the Bruhat order),  and $w_0y$ is the unique maximal element in its right coset. Ko denotes the set of all elements $w_0y$ of $W_p$ maximal in their right coset of $W_f$ by $W^+$ (with no subscript $p$, perhaps to emphasize the independence of the definition on alcove geometry). Finally, if $w\in W^+$ then $w\cdot -2\rho$ is dominant, and we have already observed a converse.  In this way we have a natural 1-1  correspondence between dominant weights  in $W_p\cdot -2\rho$ and elements of $W^+$.

\section{The results  motivating  the tables, and a question}

       The following result in the $p$-regular case restates \cite[Prop. 4.3]{Ko17} in conclusion (2),  and combines it with a consequence (1) of \cite[Prop. A.4]{Ko17}. An analog of  \cite[Prop. 4.3]{Ko17} in the $p$-singular case, presumably allowing smaller primes, is also sketched in \cite[Rem. 4.4]{Ko17}.

\begin{proposition} Suppose $\mu, \lambda$ are dominant weights with $L(\mu)$ a composition factor of $\Delta^0(\lambda)$ and with $\lambda\in W_p\cdot-2\rho$. Then also $\mu\in W_p\cdot-2\rho$ $\emph{(well known)}$, so that there are unique elements $v,w\in W^+$ with $\mu=v\cdot-2\rho$ and $\lambda=w\cdot-2\rho$. Moreover
(1) $v\leq w$, and (2) $\mathcal R(w)\subseteq \mathcal R(v)$.

\end{proposition}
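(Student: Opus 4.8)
The plan is to handle the two numbered conclusions by different routes, after disposing of the preliminary assertions. That $\mu\in W_p\cdot-2\rho$ is the linkage principle for $\Delta^0(\lambda)$: every composition factor of a module whose highest weight lies in the regular orbit $W_p\cdot-2\rho$ is linked to that highest weight, hence lies in the same orbit. The existence and uniqueness of $v,w\in W^+$ is then immediate from the bijection between $W^+$ and the dominant weights of $W_p\cdot-2\rho$ recalled in Section 2.

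For (1) I would invoke the strong linkage principle. Since $\Delta^0(\lambda)$ is a homomorphic image of the Weyl module $\Delta(\lambda)$, every composition factor $L(\mu)$ satisfies $\mu\uparrow\lambda$, that is, $v\cdot-2\rho\uparrow w\cdot-2\rho$. It then remains to convert this strong linkage relation into the Bruhat relation $v\le w$. This is the standard comparison between the $\uparrow$-order on the dominant weights of a regular orbit and the Bruhat order on the maximal coset representatives $W^+$; the only point needing care is the orientation, since the antidominant normalization of Section 2 ($C^-$ and the $-p$ wall) makes larger elements of $W^+$ correspond to larger dominant weights, so that $\mu\uparrow\lambda$ yields $v\le w$ and not the reverse. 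I expect this to be routine once the orientation is pinned against the fixed conventions.

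The substance is (2), and here the plan is to exploit that $\Delta^0(\lambda)$ is the reduction modulo $p$ of a quantum \emph{irreducible}, not merely of a Weyl module. Fix $s\in\mathcal R(w)$ and let $T$ denote translation onto the wall of type $s$ through $\lambda$. With the conventions of Section 2, Jantzen's translation criterion \cite{Ja03} reads: $T$ annihilates the irreducible $L(x\cdot-2\rho)$ precisely when $s\in\mathcal R(x)$. The decisive step is that $T$ annihilates \emph{all} of $\Delta^0(\lambda)$. Indeed, translation functors commute with reduction modulo $p$ and carry quantum irreducibles to quantum irreducibles or to $0$; on the quantum side the irreducible $L_\zeta(w\cdot-2\rho)$ whose reduction is $\Delta^0(\lambda)$ is annihilated by $T$ exactly because $s\in\mathcal R(w)$, and reducing modulo $p$ gives $T\,\Delta^0(\lambda)=0$. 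Since $T$ is exact, applying it to a composition series of $\Delta^0(\lambda)$ filters the zero module by the objects $T\,L(\mu)$, forcing $T\,L(\mu)=0$ for every composition factor $L(\mu)=L(v\cdot-2\rho)$; by the criterion this means $s\in\mathcal R(v)$. Running this over all $s\in\mathcal R(w)$ gives $\mathcal R(w)\subseteq\mathcal R(v)$.

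The one genuine obstacle is the wholesale vanishing $T\,\Delta^0(\lambda)=0$. For an ordinary Weyl module $T\,\Delta(\lambda)$ does \emph{not} vanish — only its head is killed — and correspondingly (2) is false for Weyl modules: already in affine type $A_1$ the composition factor $L(0)$ of $\Delta(2p-2)$ violates the descent containment, since the element indexing $2p-2$ has right set $\{s_0\}$ while that indexing $0$ has right set $\{s_1\}$. Thus the argument must use the irreducible origin of $\Delta^0(\lambda)$ in an essential way, through the compatibility of the quantum-to-classical reduction with translation functors. I would therefore spend the bulk of the effort making that compatibility precise (choosing the translation weight small enough that its quantum irreducible reduces cleanly, and checking that block projection commutes with reduction), together with the exact orientation of the annihilation criterion in the $C^-$/$-p$ normalization; the remaining homological bookkeeping is then formal from the exactness of $T$.
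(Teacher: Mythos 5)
Your handling of the preliminaries and of conclusion (1) coincides with the paper's: the linkage principle puts $\mu$ in $W_p\cdot-2\rho$, strong linkage applied through the surjection $\Delta(\lambda)\twoheadrightarrow\Delta^0(\lambda)$ gives $\mu\uparrow\lambda$, and the conversion of $\uparrow$ into the Bruhat order on $W^+$ is exactly the step the paper delegates to [Ko17, Prop.\ A.4] (or [PS11, Prop.\ 9.1]); your orientation (antidominant base point, so $\mu\uparrow\lambda$ yields $v\le w$) is the correct one. Where you genuinely diverge is (2): the paper offers no argument there, it simply quotes [Ko17, Prop.\ 4.3], whereas you reconstruct the underlying mechanism --- the wall-crossing translation $T$ onto the $s$-wall kills $L(x\cdot-2\rho)$ exactly when $s\in\mathcal R(x)$ (and your orientation is right: with base point in $C^-$ and $x\in W^+$, the $s$-wall of $x\cdot C^-$ lies in the upper closure precisely when $xs>x$), $T$ kills the quantum irreducible whose reduction is $\Delta^0(\lambda)$ because $s\in\mathcal R(w)$, compatibility of $T$ with reduction mod $p$ forces $T\,\Delta^0(\lambda)=0$, and exactness propagates the vanishing to every composition factor. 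Your affine $A_1$ example with $\Delta(2p-2)$ correctly isolates why the argument must exploit the quantum-irreducible origin of $\Delta^0(\lambda)$ rather than the Weyl module, for which (2) fails. The trade-off is that your route concentrates all the difficulty in the compatibility of translation functors (in particular block projection) with reduction mod $p$, which you rightly identify as the nontrivial point; the paper's citation purchases precisely that package from [Ko17] and the $\Delta^{red}$ formalism of [CPS09]. Both routes are sound; yours is self-contained where the paper's is not.
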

\begin{proof} The assertion that $\lambda$ and $\mu$ belong to the same ``dot" orbit is the well-known linkage principle. Applying strong linkage in the case of  Weyl modules, and using the fact that $\Delta^0(\lambda)$ is a homomorphic image of the Weyl module  $\Delta(\lambda)$, shows $\mu$ is strongly linked to $\lambda$. This strong linkage implies conclusion  (1) by using  \cite[Prop. A.4]{Ko17}. (Or use  \cite[Prop. 9.1]{PS11}.)  Conclusion (2) of the proposition follows directly from
\cite[Prop. 4.3]{Ko17}, which gives (2) after starting from a similar hypothesis in the more general case of regular dominant weights.  (They are always parameterized as belonging to $W^+\cdot \nu$ for some element $\nu$ in the top antidominant alcove $C^-$.)
\end{proof}

The following is a variation, in the $p$-regular case, on a question raised by Ko at the beginning of \cite[\S5]{Ko17}. She also gives a version which applies in the $p$-singular case.

\begin{ques} Suppose $p\geq h$, and let $ v, w \in W^+$ be such that $L(v\cdot -2\rho)$ is a composition factor of $\Delta^0(w\cdot -2\rho)$   Is it then true that  $\mathcal R(w) = \mathcal R(v)$?
\end{ques}

The arguments in the proof of \cite[Prop. 5.8]{Ko17}, in particular \cite[Lem. 5.9]{Ko17}, show that the question above reduces to the case of those $w\in W^+$ for which $w\cdot -2\rho$ is $p$-restricted.  No form of the ``Lusztig conjecture" is needed for this reduction.
\vspace{2mm}
 However, as Ko in effect observes, the question above has a positive answer whenever $\Delta^0(w\cdot -2\rho)$ is irreducible in all cases for which $w\in W^+$ and $w\cdot -2\rho$  is $p$-restricted.  This is equivalent to the (Kato form) of the Lusztig conjecture, when $p>h$, or in affine  type A (and others) when $p\geq h$. (Kato's version, for a given $p\geq h,$ says that the Lusztig character formula holds for any irreducible module $L(w\cdot -2\rho)$ with $w\cdot -2\rho$ $p$-restricted. In the quantum case, the character formula is known to hold for all dominant $w\cdot -2\rho,$ under the given, slightly different,
  requirements on $p.$ See \cite[p. 273]{Tan04}.)
\vspace{2mm}

$\emph{However, this Ko question potentially has positive answers}$
even when the Lusztig conjecture does not. The tables we present
suggest to us  that computation time (for characters of irreducible
$G$-modules) could be significantly reduced when the question has a
positive answer. The weaker, but proved, assertion (2) in
Proposition 1  provides a smaller reduction, but  could also be
useful.

\section{The tables}


Assume $p\geq h$. The tables deal with those $w\in\ W^+$ for which $w\leq u$ in the
Bruhat order for some $u\in W^+$ with the weight $u\cdot -2\rho$
$p$-restricted. We may rechoose  $u\cdot -2\rho$ in the same $p$ alcove as $(p-2)\rho$. Consequently, the $w$'s and $u$'s have a common unique maximal element $w_{max}$.
The $w\leq w_{max}$ in $W^+$ may all be written uniquely as
a product $w=w_0y$ with $y\in W:=W_p$ and lengths additive. The
elements $y$ are listed, as a reduced product of fundamental
reflections, in column (1) of the tables below. The element $w_0$
is listed above each table, and the length of $w$ recorded in
column (4).  Column (3) gives $w\cdot -2\rho$ as a vector of
coefficients at fundamental weights (``$\omega$-coefficient"), in
the generic case $p=h$. Column (2) gives the same element  expressed as a vector (``$\varepsilon$-vector") in
terms of a standard basis of the standard permutation module for
$W_f$, a symmetric group in each case considered (currently affine
types A$_n$ for $n=3,4,5$). The main content lies in columns (5),
(6), and  (7), discussed informally in the abstract. Column (7)
counts those
 $v\in W^+$ with $v\leq w$ in the Bruhat order; column (6) counts those among these $v$ which also satisfy
  $\mathcal R(w)\subseteq \mathcal R(v)$. Finally, column (5) counts those among these $v$ which satisfy
  $\mathcal R(w) = \mathcal R(v)$.

\vspace{4mm}

\newpage

\begin{center}

Recall that each $w=w_0y$, where $w_0$ is $s_1s_2s_3s_1s_2s_1$ for type $A_3$.

\vspace{2mm}



\end{center}

\section{Some remarks on construction of the tables}

In this section, we outline the algorithm for the construction of
the previous tables. For all programs, we utilized Sage, a
well-known programming language akin to Python, but with many more
mathematical libraries. While each program is made up of many helper
functions, we utilized two main programs: one to isolate the
``maximal" element $w\in W_p$ (defined, when $p=h$, as the element
$w$ such that $w\cdot -2\rho = (p-2)\rho$)\footnote{For $p\geq h$,
the maximal element $w$ may be defined by the condition that $w\cdot
-2\rho$ belongs to the same $p$-alcove as $(p-2)\rho$. Exact
equality need not hold when $p>h$. But $p$-alcoves contain only one
integral weight when $p=h$. } and a second to analyze the properties
of $\mathcal R(w)$, as well as the right sets of those elements in
$W^+$ less than or equal to $w$ in the Bruhat order.

Before we began actual program implementation, we constructed helper
programs which serve the following purposes: create the standard
$\varepsilon$-vectors (which are used in the calculation of the
roots and $\rho$), create $\rho$, create the roots $\alpha$, compute
the dot action of a group element on a vector, and calculate the
``height" of a weight, a metric used to tell the distance from the
maximal element. Each of the roots $\alpha$---with the exception of
$\alpha_0$---can be computed quickly using the formula
$\alpha_i=\varepsilon_i-\varepsilon_{i+1}.$ Since $\alpha_0$ is the
sum of all other $\alpha_i,$ we have that $\alpha_0$ is the
difference of the first and last $\varepsilon-$vector. Using these
roots, we may compute $\rho$ as half of the sum of all the positive
roots. Finally, we define the height of a weight as in \cite[p.
56]{CPS92}; that is, the height of a weight is defined in a way
equivalent to the sum of its coefficients when written as a sum of
fundamental roots. For a weight $\nu$ then, this is $( \nu,
\rho^\vee )$ (which for type A is just $(\nu,\rho)).$ Notice that
the functions all depend on an initial input concerning the size of
the generating set of the affine Weyl group, e.g. type $A_3,$ $A_4,$
or larger. To construct $w_0,$ the longest word in the ordinary Weyl
group, there are finitely many elements, so a smaller program was
written which computes the element of maximum length. This
calculation was carried out beforehand, and the results were
recorded. For efficacy, these results were then hard-coded into the
two main programs.

In the first program's implementation, we begin by generating all of the above variables, including the element $w_0$ and the affine Weyl group itself (which Sage allows with the function WeylGroup()). The program then proceeds as a variation of the typical minimum function, where the comparator is the height metric; that is, we look for the element $w$ such that $w\cdot -2\rho$ has height closest to $(p-2)\rho.$ Say $zw_0$ is the maximal element $w.$ Repurposing the language of \cite[Lem. 3.12.5]{CPS92}, we have that $l(z)=-l(w_0)+2(\rho,\rho^\vee)$ (where the latter term is $(\rho, \rho)$ for type A). Since the maximal element is dominant, we have that the maximal element has length $2( \rho, \rho ).$ We then subtract $l(w_0),$ and generate all elements with length $l(z)$ using the Sage function W.elements\_of\_length(). We subtract this value and generate the smaller elements for three reasons. First, we desire our output in the form $w_0y$. Second, there are substantially fewer elements of smaller length, which decreases computation time. Finally, we may restrict our search only to the dominant weights and in doing so, we must check whether $l(w_0y)=l(w_0)+l(y).$ Specifically, we reapply $w_0$ on the left and check the aforementioned condition. We only continue with these elements. With the proper elements generated, we may now proceed as a typical minimum program, first calculating the height of each element, comparing it to $(p-2)\rho,$ and keeping the closest element stored in a local variable. Once the list of elements is exhausted, the result yields the maximal element in simple reflection notation.

For the second program, we require additional helper functions, one which calculates the right set of a given element, and a second which converts $\varepsilon$-notation of vectors into $\omega$-notation for the tables. Since tables in Sage are constructed from lists, we also require the relevant outputs be placed into ordered lists. On the other hand, the right sets are stored as the values of a dictionary with the corresponding element as the key. We begin the second program by generating the standard roots, the Weyl group, and the relevant vectors, in a similar fashion to the first program. Then, we generate all elements of length 1 less than that of the maximal element, collecting only those elements both smaller in the Bruhat order and associated with dominant weights. After exhausting all elements of this length, we decrease the length and repeat this process until we reach $w_0.$ To calculate the right sets of these elements, we employ apply\_simple\_reflection\_right() for each of the elements in the generating set given by W.generators(), and simply check if there is a reduction in length on the element in question. We then keep the output stored as a set object in the dictionary. At this stage, we have an ordered list of elements less than or equal to the maximal element in the Bruhat order, a list of corresponding vectors in $\varepsilon$-notation, a list of corresponding coefficients in $\omega$-notation, and a dictionary of right sets.

For the final calculations, we iterate through the list of desired elements, comparing each element to those with higher indices. Through this comparison, we make three separate counts:

\begin{enumerate}
    \item The number of elements $v\in W^+$ following the initial element $w$ with $v\leq w$ in the Bruhat order (and which are already known to have dominant weights). This count is column (7) in the table.
    \item The number of elements from the first count with $\mathcal R(w)\subseteq \mathcal R(v)$. This count is column (6) in the table.
    \item The number of elements from the first count with $\mathcal R(w) = \mathcal R(v)$. This count is column (5) in the table.
\end{enumerate}

Since each count is specific to each element, we create ordered lists corresponding to the order of the element list. We then store these lists as a table and call the latex() function on the table object to get Sage output suitable for \LaTeX.

Of course, since the size of the ordinary Weyl group grows factorially, the number of elements of interest in the affine case grows quickly, increasing the computational burden. While we were able to run all programs up to and including type $A_4$ on a typical laptop, the calculations were usually interrupted for larger inputs. Therefore, we turned to Rivanna, an advanced computing cluster with significantly more computing power. Using Rivanna, we were able to acquire the remaining tables.
\vspace{2mm}

With the same approach, we have more recently been able to make all
the calculations for affine $A_6$.  The table below gives all  the
restricted weight cases (all restricted weights $w\cdot -2\rho$
with $w=w_0y$, $l(w)=l(w_0)+l(y)$).  Products of fundamental
reflections have been abbreviated to the sequence of their
subscripts, to save space.  For similar space considerations, the
column with $\varepsilon$-notation for $w\cdot -2\rho$ has been
replaced with an (unrelated but useful) column giving the right set
$\mathcal R(w)$ of $w$, as a set of indices of fundamental
reflections.  The full affine $A_6$ table, displaying similar rows for all $5260$ dominant weights
$w\cdot -2\rho$ in the restricted parallelotope, requires $93$ pages to display, but is available upon request.

\vspace{4mm}

\begin{center}

    Recall that $w=w_0y$ with $w_0=s_1s_2s_3s_4s_5s_6s_1s_2s_3s_4s_5s_1s_2s_3s_4s_1s_2s_3s_1s_2s_1.$

\end{center}


\section{Concluding remarks}

Though not listed in the tables, all the sets whose size is given in columns (5) and (6) above can be computed by our programs.  A next step, which we hope to carry out, is to use this information, guided by the tables, to compute characters of $L(w\cdot -2\rho)$ in new cases. We would like to take this opportunity to thank the University of Virginia Advanced Research  Computing Services group for their help, with special thanks to Jacalyn Huband.

\end{document}